\newcommand{\N}{\mathbb N}
\newcommand{\R}{\mathbb R}
\newcommand{\D}{\ensuremath{\mathcal{D}}\xspace}
\renewcommand{\H}{\mathcal{H}}
\newcommand{\A}{\ensuremath{\mathcal{A}}\xspace}
\newcommand{\Lst}{\Lambda^p_{\mathrm{st}}}
\newcommand{\Ost}{\Omega^{p, \mathrm{st}}}
\newtheorem{theorem}{Theorem}[section]
\newtheorem{lem}[theorem]{Lemma}
\newtheorem{prop}[theorem]{Proposition}
\theoremstyle{definition}
\newtheorem{definition}[theorem]{Definition}
\newtheorem{remark}[theorem]{Remark}
\theoremstyle{definition}
\newcommand{\bt}{\begin{theorem}}                     
	\newcommand{\et}{\end{theorem}}                       
\newcommand{\bd}{\begin{definition}}                  
	\newcommand{\ed}{\end{definition}}                    
\newcommand{\bl}{\begin{lem}}                       
	\newcommand{\el}{\end{lem}}                                   
\newcommand{\bpr}{\begin{prop}}                  
	\newcommand{\epr}{\end{prop}}                    
\newcommand{\bere}{\begin{remark}}                      
	\newcommand{\ere}{\end{remark}}                         
\newcommand{\beq}{\begin{equation}}
	\newcommand{\eeq}{\end{equation}}
\def\bal#1\eal{\begin{align}#1\end{align}}              
\def\baln#1\ealn{\begin{align*}#1\end{align*}}          
\def\bml#1\eml{\begin{multline}#1\end{multline}}        
\def\bmln#1\emln{\begin{multline*}#1\end{multline*}}  
\def\bga#1\ega{\begin{gather}#1\end{gather}}
\def\bgan#1\egan{\begin{gather*}#1\end{gather*}}
\begin{document}

	\title[On homotopy properties of solutions of some differential inclusions]{On homotopy properties of solutions of some differential inclusions in the $W^{1,p}$-topology}
	
	\author[E. Caponio]{Erasmo Caponio}
	\address{Dipartimento di Meccanica, Matematica  e Management, Politecnico di Bari,  Bari, Italy}
	\email{erasmo.caponio@poliba.it}
	
	\author[A. Masiello]{Antonio Masiello}
	\address{Dipartimento di Meccanica, Matematica  e Management, Politecnico di Bari, Bari, Italy}
	\email{antonio.masiello@poliba.it}
	\thanks{E.C. and A.M are partially supported by European Union - Next Generation EU - PRIN 2022 PNRR {\it ``P2022YFAJH Linear and Nonlinear PDE's: New directions and Application''}, and GNAMPA INdAM – Italian National Institute of High Mathematics. This work was also partially supported by the Italian Ministry of University and Research under the Programme 
		“Department of Excellence” Legge 232/2016 (Grant No. CUP - D93C23000100001). }

	\author[S. Suhr]{Stefan Suhr}
	\thanks{S.S. is partially supported 
		by the Deutsche Forschungsgemeinschaft (DFG, German Research Foundation) -- Project-ID 281071066 -- TRR 191.}
	\address{Fakult\"at f\"ur Mathematik, Ruhr-Universit\"at Bochum, Bochum, Germany} 
	\email{Stefan.Suhr@rub.de}
	
	\subjclass[2010]{34A60, 58B05,  93C10}

	\keywords{Homotopy equivalence, Sobolev manifolds of curves, differential inclusion, H\"ormander condition.}
	

	\begin{abstract}
		We consider  a differential inclusion on a manifold, defined by a field of open half-spaces whose boundary in each tangent space is the kernel of a one-form $\omega$. We make the assumption  that the corank one distribution associated to the kernel of $\omega $ is  completely nonholonomic of step $2$.  We identify a subset of solutions of the differential inclusion, satisfying   two endpoints  and  periodic boundary conditions, which are homotopy
		 equivalent in the $W^{1,p}$-topology, for any $p\in [1,+\infty)$, to  the based loop space and the free loop space respectively.  
	\end{abstract}
	\maketitle
	
	\section{Introduction}
	Let $M$ be a connected, smooth manifold and let us consider the  affine control system on $M$
	\beq\label{affine}\dot \gamma=X_0(\gamma)+\sum_{i=1}^{d}u_iX_i(\gamma),\eeq
	where $X_0, X_i\in \Gamma(TM)$, $i=1,\ldots d$, are smooth vector fields on $M$ such that the set $\{X_i\}_{i\in \{1,\ldots, d\}}$ generates a distribution $\D$ of constant rank.  The set of solutions $\gamma\colon I\subset \R \to M$ of \eqref{affine}  can be endowed with the Sobolev topology $W^{1,p}$ by taking controls  $u_i$, for all $i\in\{1,\ldots,d\}$, in the space $L^p(I,\R)$.
	
	Let us assume that $\{X_i\}_{i\in \{1,\ldots, d\}}$  satisfy the 
	H\"ormander condition, i.e.  for each $x\in M$, a finite number of their iterated brackets span the whole tangent space $T_xM$  (we say also that $\D$ is  completely nonholonomic ).  
	In a recent paper \cite{BoaLer17}, the authors prove that the set of solutions of \eqref{affine} connecting two given points in $M$ is homotopically equivalent in the $W^{1,p}$-topology to the based loop space of $M$.   
	If the drift $X_0$ is $0$ then $p$ can be any number in $[1,+\infty)$.  In this case, the same result    holds for periodic solutions and the free loop space as shown in  \cite{LerMon19}.
	If the drift does not vanish, the value of $p$ in \cite{BoaLer17} is confined to  an interval  $[1, p_c)$ where $p_c$ is at most $\sigma/(\sigma -1)$, with  $\sigma$ equal to  the step of the distribution, 
	i.e. the minimal number plus 1 of Lie bracket iterations necessary to generate $\Gamma(TM)$ from the base fields $\{X_i\}_{i\in \{1,\ldots, d\}}$. 
	
	It is then natural to ask if this last result can be improved by considering a variable drift, for example by multiplying $X_0$ with a  positive control $u_0$. 
	We are able to answer positively to this question 
	 at least when the distribution associated to $\{X_i\}_{i\in \{1,\ldots, d\}}$ is generated by the kernel of a smooth, nowhere vanishing,  one-form $\omega$ on $M$, provided that it is of step $2$,  and $X_0$ is transversal to it.
	The control problem  we consider is   equivalent to the  differential inclusion 
		\beq\label{diffincl}
	\dot\gamma\in \H,
	\eeq
	 where $\H$ is the field of open half-spaces $\{v\in TM:\omega(v)<0\}$ with the union of the zero section of $TM$. Actually,  the 
	trajectories   we consider are the ones associated with some specific   controls $u:[0,1]\to \R^{d+1}$, $u(t)=\big(u_0(t), u_1(t),\ldots, u_d(t)\big)$ that constitute a subset  \A   of $L^\infty([0,1], \R^{d+1})$. More specifically, let $D$ be the set of partition of the interval $[0,1]$, then
	\bmln \A:=\Big\{u\in L^\infty\big([0,1], \R^{d+1}\big)\big|\, u_0\geq 0,\ \exists P\in D\colon\\  \forall J\in P,\  u_0|_J=\mathrm{const.}=\xi_J 
		\text{ and,  $\forall i\in\{1,\ldots, d\}$,  $u_i|_J=\xi_j\alpha_{Ji}$}\Big\}.\emln
	The functions $\alpha_{J}:J\to \R^d$, $\alpha_J(t)=\big(\alpha_{J1}(t), \ldots, \alpha_{Jd}(t)\big)$
	have images in the closed ball of $\R^d$ centred at $0$ and having radius $K$, where $K$ is   
	 depends on  $m=\mathrm{dim} M$,  on the one-form  $\omega$,  and on a sequence  of local frame fields for $\Gamma(\D)$  defined  on neighbourhoods of $M$  that   cover $M$   (see Remark~\ref{Yi}, assumptions~\ref{ass1}--\ref{ass2} and  \eqref{K}).     The type of controls considered  naturally align  with the construction of a  local {\it cross-section }  $\sigma$ for the endpoint map (see Proposition~\ref{propBL2}). Anyway, the main reason that let led us to consider the set $\A$ is   that  this class of  controls   enables us to  obtain  energy bounds on compact subsets (in the $H^1$-topology) of the   trajectories  space for  certain singular Finsler metrics in   \cite{CaJaMa22cor} that generalize Kropina metrics (see, e.g.,  \cite{CaGiMS21}).
	 
	 We emphasize that our main results, Theorems~\ref{paths} and \ref{freeloop}, remain valid for a compact manifold $M$,  without assuming \ref{ass1}--\ref{ass2}, as follows: 
	 
	 \smallskip
	 \noindent{\bf Theorem.} {\it Let $M$ be a connected, compact manifold and $\D\subset TM$  a corank one, completely nonholonomic  distribution of step $2$. Then the sets $\Omega^p_{x,y}$ and $\Lambda^p$ are  homotopy equivalent    to $\Ost_{x,y}$ and $\Lst$ respectively;}
	 
	 \smallskip 
	 
	 \noindent where,   $\Omega^p_{x,y}$ and $\Lambda^p$ are the subsets of controls in $\A$ corresponding, respectively,   to the solutions of \eqref{diffincl} connecting the points $x,y\in M$ and the ones having equal initial and final point,   endowed with the $L^p$-topology, $1\leq p<\infty$, and  $\Ost_{x,y}$ and $\Lst$ are  respectively the manifold of the paths in $M$ between $x$ and $y$ and the one of the free loops in $M$,  both  endowed with the $W^{1,p}$-topology.  
	 

	\section{Differential inclusions  and Hurewicz fibrations}
	Let $(M, g_0)$ be a   connected, complete  Riemannian manifold  of dimension $m$. 
	We consider a smooth, nowhere vanishing,  one-form $\omega\in \Gamma(T^*M)$ and its  kernel distribution
	\[
	 \D:=\{v\in TM:\omega(v)=0\}. 
	\] 
	We assume that \D is    completely nonholonomic of step 2. It is not difficult to see  that this property  is equivalent to the form  $\omega\wedge d\omega$  being nowhere vanishing.
	
	The pair $(\mathcal{D},i)$ defines a {\it sub-Riemannian structure} in the sense of \cite[Definition 3.2]{AgBaBo20}, where $i\colon \mathcal{D} \hookrightarrow TM$ denotes the inclusion. 
	The Riemannian structure on $(\mathcal{D},i)$ is $g_0|_{\mathcal{D}\times\mathcal{D}}$.
	According to \cite[Corollary 3.27]{AgBaBo20} every sub-Riemannian structure is equivalent to a {\it free} sub-Riemannian structure $(\mathbf{U},f)$, i.e. a trivial vector bundle 
	$\mathbf{U}\to  M $  and a vector bundle morphism $f\colon \mathbf U\to TM$   such that there exists a surjective vector bundle morphism $\mathbf{p}\colon \mathbf{U}\to \mathcal{D}$ with $i\circ \mathbf{p}=f$ and the 
	Riemannian metric on $\mathbf{U}$ satisfies 
	\beq\label{min}
	|v|_0=\min\{|u|: u\in f^{-1}(v)\}
	\eeq
	for all $v\in \mathcal{D}$
	(here $|\cdot|_0$ and $|\cdot|$ are the norms associated, respectively, with the Riemannian metric $g_0$ and  the bundle metric on $\mathbf U$).
	
	Let $\mathcal{F}:=\{X_1,\ldots X_d\}\subset \Gamma(\mathbf{U})$ be a global orthonormal frame field on $\mathbf{U}$. Then, from \eqref{min}, for each $v\in \mathcal{D}$, it follows   
	\begin{equation}\label{E1}
		|v|_0=\min\left\{\left(\sum_{i=1}^d u_i^2\right)^{1/2}\left|  \sum_{i=1}^d u_i X_i\in f^{-1}(v) \right.\right\}.
	\end{equation}

	Let $X_0\in \Gamma(TM)$ be the smooth unit vector field on $M$ which is orthogonal to  $\D$, i.e. $g_0(X_0, X_0)=1$ and $g_0(X_0, v)=0$, for all $v \in \D$, and moreover  $-\omega(X_0)=\|\omega\|$, 
	where $\|\omega\|$ denotes  the function on $M$ given by $x\in M\mapsto \|\omega_x\|$, being $\|\omega_x\|$ the norm of $\omega_x$ w.r.t. $g_0$.
	
	\bere\label{Yi}
	For each  $z\in M$ 
	we can consider  a neighbourhood  $U_z$ of $z$ and 
	 a frame field $\{Y_i\}_{i\in \{1,\ldots, m-1\}}$ for $\Gamma(\D)|_{U_z}$  such that 
		$|(Y_i)_x|_0=1$ for all $x\in U_{z}$ and for all $i\in\{1,\ldots m-1\}$. 
		Being \D  completely nonholonomic of step $2$, up to 
	restricting $U_z$, there exist $Y_j, Y_l$ in the basis, such that $[Y_j, Y_l]$ is transversal to \D at each point in  $U_z$ and moreover   
	\beq\label{lambday}
	\lambda_z:= \inf\big(-\omega([Y_j,Y_l])\big)=\inf d\omega(Y_j,Y_l)>0.\eeq
	By rearranging the vector fields in the local frame, we can assume that the vector field $Y_j, Y_l$ are the first two $Y_1, Y_2$.
	\ere
		
We assume that 
\begin{enumerate}[label=(A\arabic*), leftmargin=1cm, series=l_after]
	\item\label{ass1} \beq\label{Omega}\Omega:=\sup\|\omega\|<+\infty;\eeq
	\item\label{ass2}  there exists  a  countable  covering    $U_{z_k}$, $k\in \mathbb N$,  of  $M$ made  by neighbourhoods $U_{z_h}$ as in Remark~\ref{Yi} and such that  	\beq\label{lambda}
	\lambda:=\inf_{k\in \N } \lambda_{z_k}>0.
	\eeq		 
\end{enumerate} 	
We notice that assumptions~\ref{ass1}--\ref{ass2} are satisfied if $M$ is compact.

	Let 
us   multiply   the vectors $Y_i$, $i\in\{1,\ldots,m-1\}$,  by the same factor
	\beq\label{K}
	K:=\big(5(m+3)\Omega/\lambda\big)^{1/2}.
	\eeq
	Let us denote these rescaled vector fields still with $Y_i$. Hence,  for the rescaled vector fields $Y_i$ the number defined in \eqref{lambda}, still denoted with $\lambda$, satisfies: 
	\beq\label{infmaxomega} \lambda >4(m+3)\Omega,\eeq
	moreover
	\beq |(Y_i)_x|_0= K, \quad x\in U_{z_k},\ k\in \N, \label{k1k2}\eeq
	for all $i\in\{1,\ldots m-1\}$.
		We consider 
	\[\H:= ([X_0]^++\D)\cup 0,\]
		where $[X_0]^+:=\{aX_0: a\in(0,+\infty)\}$ and $0$ denotes the zero section of $TM$.

	\bd
	Let  $I\subset \R$ be an interval. An absolutely continuous curve $\gamma\colon  I \to M$ is a {\it solution of the differential inclusion} \eqref{diffincl} if $\dot\gamma(t)\in \H$, a.e. on $I$.
	\ed
	We are interested in solutions which also belong  to  the Sobolev spaces $W^{1,p}(I,M)$, for  $p\in (1,+\infty)$  for a compact interval $I$. We recall that we can see $M$ as isometrically embedded in a Euclidean space $\R^N$ by Nash's isometric embedding theorem. Consequently,  $W^{1,p}(I,M)$ can be defined as the space $\{\gamma\in W^{1,p}(I,\R^N):\gamma(I)\subset M\}$, since $W^{1,p}(I,\R^N)$ 
	canonically embeds   into $C^{0,1-1/p}(I,\R^N)$.
	
	We notice that  \eqref{diffincl} is invariant by orientation preserving  reparametrisations,   thus we can assume that  its  solutions  are defined on the interval $I=[0,1]$.   
	
Since for each $i\in \{1,\ldots, d\}$,  $|X_i|=1$, by  \eqref{E1} we have $|f\circ X_i|\leq 1$. We then deduce easily that an absolutely continuous  curve $\gamma:I\to M$, with $\dot\gamma(s)\neq 0$ a.e.,   solves  \eqref{diffincl} and belongs to $W^{1,p}(I,M)$   if and only if there exist $L^p$-functions $(u_0, u_1,\ldots,u_d)=:u \colon I  \to \R^{d+1}$ (called {\it controls})  with $0<u_0=g_0(\dot\gamma, X_0)$ a.e. such that  
	\beq\label{horizontal}
	\dot{\gamma}=u_0X_0\circ\gamma +\sum_{i=1}^d u_i f\circ X_i\circ \gamma. 
	\eeq
	In particular,  since the isometric embedding can be taken closed (see \cite{Muller}), if the controls are defined on $I$,  so it is the curve $\gamma$. Vice versa,  if $\gamma\in W^{1,p}(I,M)$ solves \eqref{diffincl}, a control $u:=(u_0, u_1, \ldots, u_d)\in L^p(I, \R^d)$ exists, and it is given by $u_0=g_0(\dot\gamma, X_0)$ and  $u_\D:=\big(u_1,\ldots, u_d\big)$ equal  to the {\em minimal control} for $\dot\gamma-g_0(\dot\gamma, X_0\circ\gamma)X_0\circ\gamma$, i.e.  $\big|\dot\gamma(t)-u_0(t)X_0(\gamma(t))\big|_0=|u_\D(t)|$,  for  a.e. $t\in I$.
	
	 Let us introduce the subset  \A of controls that we consider.  
	\begin{definition}\label{setcontrols}
		Let $D$ be the set of partitions of the interval $I$.     
		We call a measurable curve $u\colon I\to \R^{d+1}$ {\it admissible} if there exists $P\in D$ such that for each  $J\in P$,  there exist  a constant  $\xi_J\ge 0$, and  measurable functions  $\alpha_{Ji}\colon J\to \R$, $i\in\{1,\ldots,d\}$,
		  with 
		  \beq\label{alpha}\sup_{s\in J} \sum_{i= 1}^d\alpha_{Ji}^2(s)\leq K^2\eeq
		  where $K$ satisfies \eqref{k1k2}, such that
		\beq\label{ui} u_0|_J=\xi_J^2\quad\text{and } 
		u_i|_J=\xi_J\alpha_{Ji}\eeq
		for all $i\in\{1,\ldots, d\}$.

		 We define  \A 	
		to be the set of all admissible curves endowed with the $L^p$-topology for some $p\geq 1$.
		\end{definition}
\bere
Notice that $\A$ contains the zero function  $u\equiv 0$, which corresponds to $\xi_J=0$ for all $J\in P$; moreover we notice that $u$ can assume the value zero only on a whole interval $\bar J\in P$, where $\xi_{\bar J}=0$.
\ere
\begin{definition}\label{conc}
	We define a {\it concatenation} operation $\star$ on \A: 
	for all $u, v\in \A$, let  $u\star v\in \A$
	be
	\[u\star v (s):=\begin{cases} u (2s),&s\in[0,1/2]\\
		v (2s-1),&s\in [1/2,1]
	\end{cases}\]
	
	We notice that $\star$ is continuous w.r.t. the $L^p$-topology, $1\leq p\leq \infty$.
\end{definition}

Using the frame field $\mathcal{F}$ we can define  
\[
S\colon M\times  \A  \to W^{1,p}(I,M),
\]
where $S(x, u)$ is the unique solution  $\gamma\colon I\to M$    to the Cauchy problem 
\beq\label{CP}
\begin{cases}
	\dot{\gamma}(t)= (u_0  X_0\circ\gamma)(t) +\displaystyle \sum_{i=1}^d u_i (f\circ X_i\circ \gamma)(t)\\
	\gamma(0)=x.
\end{cases}
\eeq
The following lemma and the subsequent proposition are quite standard; we include their proofs for the reader's convenience.
\bl\label{C0}
The solution operator $S\colon M\times  \A  \to W^{1,p}(I,M)\subset C^0(I,M)$ is a continuous map with respect to the  $L^p$-topology, $1\leq p\leq \infty$, in $\A$ and the  $C^0$-topology in the target space.
\el
\begin{proof}
Consider a sequence $(x_n, u_n)$ in 
$M \times \A$ such that $(x_n, u_n)\to (x, u)\in M\times \A$. 
By the Nash isometric embedding theorem, we can  
consider  $\R^N$ with the Euclidean topology as the target space of the curves $\gamma$ in \eqref{CP}. Since $x_n\to x$, we can take a relatively compact neighbourhood $U\subset \R^N$ of $x$ such that $(x_n)\subset U$.  The embedding being  closed (see \cite{Muller}), the vector fields $X_0$, $f\circ X_i$  can  be extended outside   $U\cap M$ to smooth bounded vector fields on $\R^N$  with compact support which are therefore  globally Lipschitz on  $\R^N$. Let us denote by $L_U$ the maximum of the Lipschitz constants of these vector fields. Let us consider the system \eqref{CP} with these modified vector fields  and let  us  still denote 
by $S$ the corresponding solution operator, $S:\R^N\times \A\to W^{1,p}(I,\R^N)$.
 Let then $C_U>0$ the maximum of the $C_0$-norms  of these modified vector fields.
We have, for all $t$ in the compact interval $I$:
\bmln
	|S(x_n, u_n)(t) -S(x,u)(t)|\leq |x_n -x|+C_U\int_0^t|(u_0)_n-u_0|ds\\+ L_U\int_0^t|u_0||S(x_n, u_n) -S(x,u)|ds \\+ C_U \sum_{i=1}^d \int_0^t |(u_i)_n-u_i|d s+ L_U \sum_{i=1}^d \int_0^t |u_i||S(x_n, u_n) -S(x, u)|d s.
	\emln
	 By the Gronwall inequality we then get
	\[|S(x_n, u_n)(t) -S(x, u)(t)|\leq b_n(t) e^{\int_0^t a(s) ds},\]
	where
	\baln
	&b_n(t)=|x_n -x|+ C_U \big(\int_0^t\big(|(u_0)_n-u_0|+\sum_{i=1}^d  |(u_i)_n-u_i|\big)d s\big )\\
	&a(t)= L_U\sum_{i=0}^d|u_i(t)|.
	\ealn
	Since 
	\[
	\int_I\big(|(u_0)_n-u_0|+\sum_{i=1}^d |(u_i)_n-u_i|\big)ds\to 0\] 
	and $x_n\to x$, we conclude  that $S(x_n, u_n)\to S(x, u)$ in the $C^0$-topology with the target space $\R^N$.  Hence there exists $\delta>0$ such that $S(x_n, u_n)$ and  $S(x, u)$ restricted to $[0,\delta]$ have image contained in $U$ and are then also solution of the initial system \eqref{CP}. We can then repeat the above argument with initial points $S(x_n, u_n)(\delta)$ and $S(x, u)(\delta)$ and a neighbourhood  of $S(x,u)(\delta)$. Being $S(x,u)$ compact in $M$ in a finite number of step we can conclude that $S(x_n, u_n)\to S(x, u)$ in the $C^0$-topology.  
	\end{proof}
	
	\bpr\label{SIC0}
$S\colon M\times  \A  \to W^{1,p}(I,M)$, $1\leq p\leq\infty$,   is a continuous map.
	\epr
	\begin{proof}
Let $(x_n, u_n)$ in $M \times \A$ such that $(x_n, u_n)\to (x, u)\in M\times \A$. From Lemma~\ref{C0}
	 the curves $S(x_n,u_n)$  are  contained  in  a compact  subset $K$   of $M$.    This allows us to control the distance associated to $g_0$ both from below and above by the Euclidean distance in $\R^N$. Hence it is enough to prove the  remaining convergence needed using the  Euclidean metric  in $\R^N$.  The vector fields $X_0|_K$, $(f\circ X_i)|_K$, can  be extended to smooth bounded vector  fields  on $\R^N$  with compact support inside an open neighbourhood of $K$ in $R^N$. These extended vector fields (that will be denoted with the same symbols $X_0$, $f\circ X_i$) are therefore  globally Lipschitz on  $\R^N$. Let us denote  by $L$ and $C$  the maximum of their Lipschitz constants and  $C^0$ norms respectively.

Let us also denote the derivatives of  the curves $S(x,u)$ and $S(x_n,u_n)$  by $\dot S(x,u)$ and  $\dot S(x_n,u_n)$, respectively. 

From the equation in \eqref{CP} and	the triangle inequality we obtain
\baln
|\dot S(x_n, u_n) -\dot S(x, u)|\le & \; |(u_0)_n  X_0\circ S(x_n, u_n) - u_0 X_0\circ S(x, u)| \\
& +\displaystyle \sum_{i=1}^d |(u_i)_n f\circ X_i\circ S(x_n, u_n)-u_i f\circ X_i\circ S(x, u)|\\
\le & \; |(u_0)_n -u_0| |X_0\circ S(x_n, u_n)|\\
& +|u_0| |X_0\circ S(x_n, u_n) -  X_0\circ S(x, u)| \\
& +\sum_{i=1}^d |(u_i)_n -u_i||f\circ X_i\circ S(x_n, u_n)|\\
& +\sum_{i=1}^d |u_i||f\circ X_i\circ S(x_n, u_n)- f\circ X_i\circ S(x, u)|.
\ealn	
Using the Lipschitz constant and the bound on the vector fields it follows 
\baln
|\dot S(x_n, u_n) -\dot S(x, u)|\le & \; C\big((u_0)_n -u_0| + \sum_{i=1}^d |(u_i)_n -u_i|\big)\\
& + L\big(|u_0| + \sum_{i=1}^d |u_i|\big)|S(x_n, u_n) -   S(x, u)|.
\ealn
Finally with Jensen's inequality we get 
\baln
|\dot S(x_n, u_n) -\dot S(x, u)|^p\le & \; (2(d+1))^{p-1} \Big[C^p\big(|(u_0)_n -u_0|^p + \sum_{i=1}^d |(u_i)_n -u_i|^p\big)\\
& + L^p\big(|u_0|^p + \sum_{i=1}^d |u_i|^p\big)|S(x_n, u_n) -   S(x, u)|^p\Big],
\ealn
from which  the  convergence of  $S(x_n, u_n)$  to $S(x, u)$ in $W^{1,p}(I,\R^N)$ and consequently in $W^{1,p}(I,M)$ easily follows.   
\end{proof}
	
Let us  define the {\it endpoint map} 
\[
F\colon  M\times \A \to M\times M, \quad (x,u) \mapsto \big(x,S(x, u)(1)\big). \]

\bere
The uniform convergence in Proposition~\ref{SIC0} especially implies that $F$ is continuous.
\ere

The space under consideration here is 
\[
\Lambda^p := F^{-1}(\triangle_M),
\]
where $\triangle_M:=\{(x,x)\in M\times M\}$ denotes the diagonal in the product $M\times M$ together with the induced topology. 

\bd \label{covering}
Let $E,B$ be topological spaces. A  continuous  map $\pi \colon E\to B$ 
is a {\it Hurewicz fibration} if it satisfies the {\it homotopy lifting property}, i.e. for every topological spaces $Z$ and every homotopy
$H\colon Z\times [0,1]\to B$ with a  continuous  lift $\tilde{H}_0\colon Z\to E$ of $H(\cdot,0)\colon Z\to B$,  relative  to  $\pi$, i.e. $\pi \circ \tilde{H}_0= H(\cdot,0)$,  there exists a  continuous  lift $\tilde{H}\colon Z\times [0,1]\to E$ of $H\colon Z\times [0,1]\to E$ relative  to  $\pi$.
\ed

The main result of this section is the following: 

\begin{theorem}\label{thmHurew}
Let  assumptions~\ref{ass1}--\ref{ass2} hold. Then  the endpoint map 
	\[
	F|_{\Lambda^p} \colon \Lambda^p \to \triangle_M
	\] 
	is a Hurewicz fibration for every $  1 \leq   p<\infty$.
\end{theorem}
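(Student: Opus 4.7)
The plan is to verify the homotopy lifting property (HLP) directly by constructing a continuous lift, with the main ingredients being the local continuous cross-section of the endpoint map $F$ produced in the forthcoming Proposition~\ref{propBL2}, the concatenation $\star$ (Definition~\ref{conc}), and the continuity of the solution operator (Proposition~\ref{SIC0}). I will use that Proposition~\ref{propBL2} supplies, for each $x_0\in M$, an open neighborhood $V$ of $x_0$ and a continuous map $\sigma\colon V\times V\to\A$ satisfying $F(x,\sigma(x,y))=(x,y)$ for all $x,y\in V$. Since $\triangle_M\cong M$ is paracompact and the sets $(V\times V)\cap\triangle_M$ form an open cover of $\triangle_M$, Hurewicz's uniformization theorem reduces the problem to verifying the HLP over each such local piece.

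Fix such a $V$ and consider a homotopy $H\colon Z\times[0,1]\to (V\times V)\cap\triangle_M$, written $H(z,t)=(p(z,t),p(z,t))$, together with an initial lift $\tilde H_0(z)=(p(z,0),w(z))$, where $w(z)\in\A$ is a loop control based at $p(z,0)$. The natural formula for the lift over $V$ is
\[
\tilde H(z,t)\;=\;\Bigl(p(z,t),\ \sigma(p(z,t),p(z,0))\star w(z)\star\sigma(p(z,0),p(z,t))\Bigr),
\]
which traced under $S$ reads: steer $p(z,t)$ to $p(z,0)$, run the loop $w(z)$ at $p(z,0)$, and steer $p(z,0)$ back to $p(z,t)$. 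The outcome is a loop control based at $p(z,t)$, so $\tilde H(z,t)\in\Lambda^p$, and continuity in $(z,t)$ follows from the joint continuity of $\sigma$, of the concatenation $\star$, and of $S$.

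The delicate step is matching the initial condition $\tilde H(z,0)=\tilde H_0(z)$: because $\star$ rescales the time interval $[0,1]$, the sandwich $\sigma(p(z,0),p(z,0))\star w(z)\star\sigma(p(z,0),p(z,0))$ does not literally equal $w(z)$, even if $\sigma(x,x)$ is arranged to be the zero control. I would remedy this by choosing $\sigma$ in Proposition~\ref{propBL2} so that $\sigma(x,y)$ has time-support shrinking to $0$ as $y\to x$, and then interpolating continuously in $t$ so that the two auxiliary factors $\sigma(p(z,t),p(z,0))$ and $\sigma(p(z,0),p(z,t))$ are progressively \emph{switched off} as $t\downarrow 0$; equivalently, the rigid $\star$ can be replaced by a $t$-dependent concatenation whose two outer windows vanish at $t=0$. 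Once this matching is achieved, $\tilde H$ is the sought continuous lift.

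The main obstacle is precisely this initial-value matching at $t=0$, and it is the reason the admissibility requirements of Definition~\ref{setcontrols} (in particular the bound \eqref{alpha} and the choice of $K$ in \eqref{K}) are calibrated as they are: they ensure that $\sigma$ can be realized with values in $\A$ and with arbitrarily short time-support, so that the \emph{switching off} interpolation remains within admissible controls and produces a genuine element of $\Lambda^p$ at every intermediate $t$. With the HLP verified over each member of the cover, the Hurewicz uniformization theorem promotes the local fibration structure to the global Hurewicz fibration asserted in the statement.
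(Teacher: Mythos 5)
Your proposal follows the same architecture as the paper's proof: reduce to a local covering homotopy property via the Hurewicz uniformization theorem, produce a candidate lift by conjugating the initial loop control $w(z)$ by the local cross-sections $\sigma(p(z,t),p(z,0))$ and $\sigma(p(z,0),p(z,t))$, and then repair the mismatch at $t=0$ by a $t$-dependent reparametrization whose outer windows collapse as $t\downarrow 0$. That is exactly the construction in the paper (the map $\tilde H_1$ built from $c(\zeta,s)$ on the windows $[0,s/4)$, $[s/4,1-s/2]$, $(1-s/2,1]$). However, you state ``Once this matching is achieved, $\tilde H$ is the sought continuous lift'' without justifying continuity of the reparametrized lift at $s=0$ in the $L^p$-topology, and this is not automatic: $s$-dependent affine reparametrization of an $L^p$-control is the delicate point, and the paper devotes the bulk of the argument to it. Concretely, one has to decompose $\int_0^1|\tilde H_1(\zeta,s)-\tilde H_1(\bar\zeta,0)|^p\,dt$ over the three windows, control the two short outer pieces by absolute continuity of the integral, and control the middle piece by adding and subtracting $c(\bar\zeta,s)$ composed with the reparametrization, then invoking uniform integrability and Vitali's convergence theorem to handle the $s$-dependent change of variable applied to $c(\bar\zeta,s)$. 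That step is a genuine gap in your writeup; asserting ``joint continuity of $\sigma$, $\star$, and $S$'' covers the naive lift but not the reparametrized one you actually need.

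Two smaller remarks. First, your alternative remedy of modifying $\sigma$ so its time-support shrinks as $y\to x$ is not ``equivalent'' to the reparametrization under the rigid $\star$ of Definition~\ref{conc}, since $\star$ always allocates half the interval to each factor; the $t$-dependent reparametrization is the essential device, not an interchangeable variant. Second, the calibration of $K$ in \eqref{K} is not there to keep the ``switching off'' interpolation inside $\A$; it is chosen so that $\omega_{x_0}(4\tilde X_0-\tilde Y_m)>0$, which makes the generalized Jacobian of $G$ of maximal rank and lets Clarke's inverse function theorem produce the Lipschitz cross-section $\psi$, hence $\sigma$, in Proposition~\ref{propBL2}. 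The reparametrization itself stays in $\A$ for free, because $\A$ is defined via arbitrary partitions of $I$ and the bound \eqref{alpha} is pointwise, hence invariant under reparametrizations by monotone affine maps.
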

Theorem~\ref{thmHurew} was proved in \cite{LerMon19} for $p=2$ and $X_0=0$;    the space of paths  between two points satisfying an affine control system of the type \eqref{horizontal}, with $u_0$ fixed equal to $1$, has been studied in \cite{DomRab12}   for $p=1$ and  in    \cite{BoaLer17} for $p>1$ ($p$ below a ``critical'' exponent related to the step of the distribution $\D$); the case of horizontal paths defined by a completely nonholonomic 
distribution  and endowed  with the uniform topology was previously considered  in \cite{Sarichev}, where credit was   given to \cite{Smale58} for certain  ideas employed. 

For the proof, we need the following proposition, which can be seen as the counterpart for the differential inclusion \eqref{diffincl} of \cite[Lemma 1]{DomRab12} and  \cite[Proposition 2]{BoaLer17} valid for an affine control system. We emphasize that in our case the upper bound on $p$, existing  for affine control systems,  is not present.
We use  a non-smooth implicit function theorem developed by F. H. Clarke. Let us introduce some related notions.

Let $A\subset \R^n$ be an open subset and $G:A\to \R^m$ be a Lipschitz map. Let  $\Omega_G\subset A$ be the set of points where $G$ is not differentiable. Let $J_G(y)$ be the Jacobian matrix of $G$ at a point $y\in A\setminus \Omega_G$. Following \cite[Definition 2.6.1]{Clarke90}, we say that the {\em generalized Jacobian} at $x\in A$,
denoted with $\partial G(x)$ is the convex hull of all the matrices obtained as limits of $J_G(x_i)$ as $x_i\to x$, $x_i\not \in \Omega_G$.  The generalized Jacobian at $x$ is said to be of {\em maximal rank} if all the matrices in  $\partial G(x)$ have maximal rank. The following theorem is  \cite[Th. 7.1.1]{Clarke90}.
\bt[Non-smooth inverse  function theorem by Clarke]\label{IFT}\mbox{}\\
Let $\partial G(x_0)$ be of maximal rank, then there exist neighborhoods $V$ and $U$ of $x_0$ and 
$G(x_0)$, respectively, and a Lipschitz function $H: U \to \R^n$ such that 
\begin{itemize}
\item[(i)] $G\big(H(u)\big) = u$ for every $u\in U$; 
\item[(ii)] $H\big(G(v)\big) = v$ for every $v\in  V$.
\end{itemize}
\et 
We use Theorem~\ref{IFT} in the proof of Lemma~\ref{lemBL2} below, where we show  local injectivity at $0\in \R^m$ of a map $G(x,\cdot):\R^m\to M$ given as the  composition of endpoint maps associated to the flows of  vector fields that are sections of $\H$ in \eqref{affine}.
Let us introduce the map $G$.

Recalling Remark~\ref{Yi}, let $h\in \N$ such that  $x_0\in  U_{z_h}$ and let   us take   a local basis $(Y_j)_{j\in \{1,\ldots, m-1\}}$ for $\Gamma(\D)$  defined in $U_{z_h}$,  and $Y_m:=[Y_1, Y_2]$. For each $j\in\{1,\ldots, m-1\}$, let $(a_{ji})_{i\in\{1,\ldots,d\}}: U_{z_h}\to \R$ such that on $U_{z_h}$ we have both
\beq \label{comb}
Y_j=\sum_{i=1}^d a_{ji} f\circ X_i,\quad\text{and}\quad \big|(Y_j)\big|_0^2= \sum_{i=1}^d a_{ji}^2
\eeq 
(recall \eqref{E1}). We notice that the vector field $a_j\colon U_{z_h}\to \R^d$, having components $\big(a_{j1}(x), \ldots, a_{jd}(x)\big)$ and which realizes  \eqref{comb}, is  smooth. In fact, for each $x\in U_{z_h}$, $a_j(x)$ is the vector in the affine subspace $f^{-1}(Y_j(x))$ of $\R^d$  which is orthogonal to the kernel of $f$ on the fibre $\mathbf U_x$. Since $f$ and $Y_j$ are both smooth, it follows  that this vector smoothly varies with $x\in U_{z_h}$. 

We   extend the vector fields $Y_j$  (after restricting them  to a smaller open set $U_{z_h}$) without changing the supremum of their $g_0$-norms,    to  obtain  global sections $Y_j$ of \D.  

For  each $j\in \{1,\ldots,m\}$,  any $\xi_j, \xi_{m_1}, \xi_{m_2}\in\R$  and any $x\in M$ let us consider 
\beq\label{Qi}
\begin{split}
	&Q_j(\xi_j)(x):=e^{\xi_j^2\tilde X_0+\xi_j \tilde Y_j}(x),\quad  \text{ for $j=1,\dots, m-1$,}\\
	&Q_m(\xi_{m_1},\xi_{m_2})(x):=\\
	&\quad \quad e^{\xi_{m_2}^2\tilde X_0+\xi_{m_2} \tilde Y_2}\circ e^{\xi_{m_1}^2\tilde X_0+\xi_{m_1} \tilde Y_1}\circ e^{\xi_{m_2}^2\tilde X_0-\xi_{m_2} \tilde Y_2}\circ e^{\xi_{m_1}^2\tilde X_0-\xi_{m_1}\tilde Y_1}(x),	
\end{split}
\eeq
being, for $X\in \Gamma(TM)$, $e^X(x)$ the value at $t=1$ of the flow of $X$ that passes through $x$ at $t=0$; the vector fields $\tilde X_0$ and $\tilde Y_j$ are respectively equal to $X_0/(m+3)$, and $Y_j/(m+3)$,  for $j\in\{1,\ldots,m-1\}$.
We define 
\bal
&G:M\times  \R^m \to M,\nonumber\\
&G(x, \xi)=Q_m(\mathrm{sgn}(\xi_m)\sqrt{|\xi_m|}, \sqrt{|\xi_m}|)\circ Q_{m-1}(\xi_{m-1})\circ\ldots\circ Q_1(\xi_1)(x)\label{G}\eal
\begin{lem}\label{lemBL2}
Let assumptions~\ref{ass1}--\ref{ass2} hold.	Then  for each $x_0\in M$,  there   exists a neighbourhood $V$ of $x_0$ and a continuous map 
	\[
		\psi \colon V\times V \to  \R^m, 
	\]
	such that  $G(x, \psi(x,y))= y$ and $\psi(x,x)=0$ for all $x,y\in V$.  
\end{lem}
\begin{proof}
	From the Baker-Campbell-Hausdorff formula (see, e.g., \cite[p.28]{Serre}) for $\xi_{m_1},\xi_{m_2}$ small enough,  we  have 
	\[Q_m(\xi_{m_1},\xi_{m_2})=e^{2(\xi_{m_1}^2+\xi_{m_2}^2) \tilde X_0 - \xi_{m_1}\xi_{m_2}[\tilde Y_1, \tilde Y_2]+\text{terms of degree $\geq 3$ in } \xi_{m_1}, \xi_{m_2}}.\]
	Thus,  $G$ 
	is Lipschitz  on a neighbourhood of $(x_0,0)$  and  $G(x, 0)=x$. We notice that actually $G$ is differentiable w.r.t. $\xi$, for each $\xi\neq 0$. 
	 Let us then consider the map 
	\[\tilde G\colon M\times\R^m\to M\times M, \quad \tilde G(x,\xi)=\big(x,G(x, \xi)\big).\] 
	It admits   generalized Jacobian  $\partial \tilde G(x,0)$ given by
	\[\begin{bmatrix}
		I & 0 \\
		\partial_x G(x,0)& A(\lambda, x,0)
	\end{bmatrix}\]
	where $A(\lambda, x,0)$ is the linear matrix pencil
	\[
	\lambda\in[0,1]\mapsto \begin{bmatrix}\tilde Y_1(x)\\ \ldots\\ \tilde Y_{m-1}(x)\\ -4\lambda \tilde X_0(x)  +4(1-\lambda)\tilde X_0(x) -  \tilde Y_m(x)\end{bmatrix}
	\]
	 We notice that $\partial \tilde G(x_0,0)$ is of maximal rank. In fact, as  both $\omega_{x_0}(\tilde X_0)$ and $\omega_{x_0}(\tilde Y_m)$ are negative (recall \eqref{lambday}), then $-4\tilde X_0(x_0)-\tilde Y_m(x_0)$ is  transversal to $\D_{x_0}$. Since  $\D_{x_0}$ is generated by the vectors in the first $m-1$ rows of $A(\lambda, x,0)$,  the maximality of the rank of  $\partial \tilde G(x_0,0)$ is ensured if  $ \omega_{x_0}(4\tilde X_0-\tilde Y_m)>0$. 
	Recalling \eqref{lambday}--\eqref{lambda}, this follows from \eqref{infmaxomega} as
	\bmln
	\omega_{x_0}(4\tilde X_0-\tilde Y_m) = - \frac{4}{m+3}\|\omega_{x_0}\|-\frac{1}{(m+3)^2}\omega_{x_0}([Y_1,Y_2])\\ > -\frac{4\Omega}{m+3}+\frac{\lambda}{(m+3)^2}.
	\emln
	 By Theorem~\ref{IFT} applied to $\tilde G$,  there exists then  a neighbourhood $V$ of $x_0$ and a Lipschitz  function $\psi\colon V\times V\to \R^m$ such that 
	\[G(x, \psi(x,y))=y, \quad \text{for all $x,y\in V$.}\]
	\end{proof}
From Lemma~\ref{lemBL2} we get the existence of a local cross-section for the endpoint map $F$.
\bpr\label{propBL2}
Let assumptions~\ref{ass1}--\ref{ass2} hold.	Then  for each $x_0\in M$,  there   exists a neighbourhood $V$ of $x_0$ and a continuous maps 
\[
	\sigma\colon V\times V \to  \A, 
\]
such that  $F(x, \sigma(x,y))= (x,y)$ and $\sigma(x,x)=0$ for all $x,y\in V$.  
\epr
\begin{proof}
	Let $\psi=(\psi_1, \ldots, \psi_m)$   the map in Lemma~\ref{lemBL2}. It defines, for each $x,y\in V$,  a vector  $\xi\in \R^m$ which  appears in  the definition of $G$ in \eqref{G}. We can  then associate  to this vector the continuous piecewise smooth curve $\tilde \gamma_{x,y}:[0,m+3]\to M$ connecting $x$ to $y$, obtained as the concatenation of the  flow lines, defined on intervals of  length  $1$ of the $m+3$ vector fields appearing in \eqref{Qi}. Since $\psi$ is Lipschitz, these
	flow lines, depending on $x$ and $y$,   uniformly converge 
	on their unit interval of definition (see the proof of \cite[Theorem 2.1, p. ~94]{Hartma64}), i.e.   the map 
	\[
	(x,y)\in V\times V\mapsto \tilde  \gamma_{x,y}\in C^0([0,m+3], M),
	\]  
where $C^0([0,m+3], M)$ is endowed with uniform convergence topology,  is continuous. We can  reparametrise $\tilde \gamma$ on the interval $[0,1]$, by taking $\gamma_{x,y}(t):=\tilde \gamma_{x,y}((m+3)t)$. Notice that $\gamma_{x,y}$ is the concatenation of the flow lines of the vector fields appearing in \eqref{Qi} with $X_0$ and $Y_j$ that replace, respectively, $\tilde X_0$, and $\tilde Y_j$; moreover  each flow line is parametrized on an interval  of  length  $1/(m+3)$. Let us then define (recall \eqref{comb}-- \eqref{G}) 
	\baln
	&(x,y)\in V\times V\mapsto  \sigma(x,y):[0,1]\to \R^{d+1}\\ 
	&\sigma(x,y) (t)=\\
	&\left\{\begin{aligned}&\big(\psi_j^2(x,y), \psi_j(x,y)  a_{j1}(\gamma_{x,y}(t)), \ldots, \psi_j(x,y)a_{jd}(\gamma_{x,y}(t))\big),\\
		& \hspace{4cm}t\in \big[(j-1)/(m+3),j/(m+3)\big),\  j\in \{1,\ldots, m-1\}\\
		&\big(|\psi_m(x,y)|, -\epsilon_m\sqrt{|\psi_m(x,y)|} a_{11}(\gamma_{x,y}(t)), \ldots, -\epsilon_m\sqrt{|\psi_m(x,y)|}a_{1d}(\gamma_{x,y}(t))\big),\\
		&\hspace{4cm} t\in \big [(m-1)/(m+3),m/(m+3)\big )\\
		&\big(|\psi_m(x,y)|, -\sqrt{|\psi_m(x,y)|} a_{21}(\gamma_{x,y}(t)), \ldots, -\sqrt{|\psi_m(x,y)|}a_{2d}(\gamma_{x,y}(t))\big),\\
		&\hspace{4cm} t\in \big [m/(m+3),(m+1)/(m+3)\big)\\
		&\big(|\psi_m(x,y)|, \epsilon_m\sqrt{|\psi_m(x,y)|} a_{11}(\gamma_{x,y}(t)), \ldots,\epsilon_m\sqrt{|\psi_m(x,y)|}a_{1d}(\gamma_{x,y}(t))\big),\\
		&\hspace{4cm} t\in \big [(m+1)/(m+3),(m+2)/(m+3)\big)\\		
		&\big(|\psi_m(x,y)|, \sqrt{|\psi_m(x,y)|} a_{21}(\gamma_{x,y}(t)), \ldots, \sqrt{|\psi_m(x,y)|}a_{2d}(\gamma_{x,y}(t))\big),\\
		&\hspace{4cm} t\in \big [(m+2)/(m+3),1\big ]
	\end{aligned}\right.
	\ealn
	where $\epsilon_m:=\mathrm{sign}(\psi_m(x,y))$.						
	Let $J_j$, $j\in \{1,\ldots, m+3\}$ be one of the intervals  of the variable $t$ in the definition above. We notice that by  construction (recall \eqref{k1k2} and \eqref{comb})
	\[
	\sup_{t\in J_j} \sum_{i=1}^d\big (a_{ji}(\gamma_{x,y}(t))\big )^2\leq K^2,
	\]
	hence   $\sigma(x,y)\in \A$. 
	Moreover,  since the map
	$(x,y)\in V\times V\mapsto \gamma_{x,y}|_J\in C^0(J,M),$ is continuous and the functions $a_{ji}$ are smooth, each  map
	\[(x,y)\in V\times V\mapsto a_{ij}\circ (\gamma_{x,y})|_J\in C^0(J,\R),\]   
	is continuous (the target space is endowed with the  topology of  uniform convergence).   Then the map 
	\[
	(x,y)\in V\times V\mapsto \sigma(x,y)\in \A
	\] 
	is continuous  when $\A$ is endowed with  the $L^p$-topology, $1\leq p\leq \infty$.
\end{proof}
We are now ready to prove Theorem~\ref{thmHurew}.

\begin{proof}[Proof of Theorem~\ref{thmHurew}]
	Since $\triangle_M\cong M$ is a metric space, by the Hurewicz uniformization theorem \cite{Hur55}, it is enough   to show that $F|_{\Lambda^p}$ is  locally a  Hurewicz fibration. 
	Thus, it suffices  to prove that the covering homotopy condition (recall Definition~\ref{covering}) holds locally, i.e. for any $x\in M$ there exists a neighbourhood  $W\subset M$, such that given  a   topological space $Z$, a homotopy $H\colon Z\times [0,1]\to \triangle_W$ 
	and a lift $\tilde{H}_0\colon Z\to  F^{-1}(\triangle_W)$ of $H(\cdot,0)$, there exists   a continuous map $\tilde{H}\colon Z\times [0,1]\to 
	F^{-1}(\triangle_W)$ with $F\circ \tilde{H}=H$ and $\tilde{H}(\cdot,0)=\tilde{H}_0$.

	Let then $x_0\in M$ be given and choose $W$ according to  Proposition \ref{propBL2}.
	
	Let $h:=\mathrm{pr_1}\circ H$, where $\mathrm{pr_1}:M\times M\to M$ is the projection on the first factor, and let $\tilde h_0:Z\to \A$ be the second component of $\tilde H_0$. Consider, for each $\zeta\in Z$ and $s\in [0,1]$, 
	\[
	\sigma\big (h(\zeta,s),h(\zeta,0)\big)\star \tilde{h}_0(\zeta)\in   \A, 
	\]
	where $\star$ is the concatenation operation in Definition~\ref{conc}.  We notice  that $S\Big(h(\zeta,s), \sigma\big(h(\zeta,s),h(\zeta,0)\big)\star \tilde{h}_0(\zeta)\Big)$ is a  path, satisfying \eqref{diffincl},   from $h(\zeta,s)$ to $\mathrm{pr_1}\big(H(\zeta,0)\big)$, i.e. 
	\[F\Big(h(\zeta,s), \sigma\big (h(\zeta,s),h(\zeta,0)\big)\star \tilde{h}_0(\zeta)\Big)=\big(h(\zeta, s), h(\zeta,0)\big).\]
	Thus, we can  complete the control $\sigma\big (h(\zeta,s),h(\zeta,0)\big)\star \tilde{h}_0(\zeta)$ to  a loop control $c(\zeta,s)$ pointed at   $h(\zeta,s)$ by taking
	\beq\label{czetas}
	c(\zeta,s):=\Big(\sigma\big (h(\zeta,s),h(\zeta,0)\big)\star \tilde{h}_0(\zeta)\Big)\star \sigma\big(h(\zeta,0),h(\zeta,s)\big).
	\eeq
	Let  $\tilde H_1:Z\times [0,1]\to \A$ be 
		\beq\label{tildeH1}
	\tilde{H_1}(\zeta,s)(t):=\begin{cases}
		c(\zeta,  s)(t/s)&t\in [0,s/4)\\
		c(\zeta, s)\big((t+1-s)/(4-3s)\big)&t\in [s/4,1-s/2]\\
		c(\zeta,s)(t/s-1/s+1)&t\in (1-s/2,1]
	\end{cases}
	\eeq
	
	Let us observe that $\tilde H_1(\zeta, 1)=c(\zeta, 1)$ and $\tilde H_1(\zeta, 0)(t)=c(\zeta,0)\big((t+1)/4\big)$. As a piecewise reparametrization  of concatenation maps by affine functions,  $\tilde H_1$ is continuous on $Z\times (0,1]$ (this can be proved  by applying the same argument used to estimate integral \eqref{2nd} below). In order to conclude the proof we need to show the continuity of $\tilde H_1$ at  $(\bar \zeta, 0)$ for all $\bar \zeta \in Z$. Indeed, defining
	\[
	\tilde H:Z\times [0,1]\to F^{-1}(\triangle_W),  \quad \tilde H(\zeta, s):=\big(h(\zeta,s), \tilde H_1(\zeta,s)\big),
	\] 
	we notice that 
	$F(\tilde H(\zeta, s))=(h(\zeta, s), h(\zeta,s))=H(\zeta, s)$;  moreover  the second component of $\tilde H(\zeta, 0)$  is equal to  $\tilde H_1(\zeta, 0)=c(\zeta, 0)\big( (t+1)/4)\big)$   which is equal to $\tilde h_0(\zeta)$ by \eqref{czetas}, hence $\tilde H(\zeta,0)=\tilde H_0(\zeta)$.

	\medskip
	\noindent {\bf Claim.}
{\it	The map $\tilde H_1:Z\times[0,1]\to \A$ in \eqref{tildeH1}  is continuous, with  \A endowed with the $L^p$-topology, $1\leq p<\infty$.}\par
\smallskip
\noindent Let $(\zeta, s)\in Z\times [0,1]$. If $s=0$ then 
 \beq\label{h0} \int_0^1|\tilde H_1(\zeta, 0)-  \tilde H_1(\bar \zeta, 0)|^pdt = \int_0^1|\tilde h_0(\zeta)-\tilde h_0(\bar \zeta)|^pdt, \eeq
  and since $\tilde h_0$ is continuous in the $L^p$ -topology of the target space, \eqref{h0} is less than any $\epsilon>0$ in a neighbourhood $U_1$ of $\bar\zeta$. Let now $s>0$ and let us consider 
 	\beq\label{intildeH1} \int_0^1|\tilde H_1( \zeta, s)-  \tilde H_1(\bar \zeta, 0)|^pdt \eeq
	 Decomposing \eqref{intildeH1}  in the sum of three integrals on the intervals $[0, s/4]$, $[s/4, 1-s/2]$ and $[1-s/2, 1]$ and changing the variable in the first and the last one, we get
	\bal  \lefteqn{ \int_0^1|\tilde H_1(\zeta, s)-  \tilde H_1(\bar \zeta, 0)|^pdt  }&\nonumber \\
	&=\int_0^{1/4}s \big|c(\zeta, s)(\tau)-c(\bar \zeta, 0)\big ((s\tau +1)/4\big )\big|^p d\tau\label{1st}\\
 &\quad  +\int_{s/4}^{1- s/2}\big|c(\zeta, s)\big( (t+1-s)/(4-3s)\big)-c(\bar \zeta, 0)\big((t+1)/4\big )\big|^p dt\label{2nd} \\
	&\quad\quad +\int_{1/2}^1 s\big|c(\zeta, s)(\tau)-c(\bar \zeta, 0)\big ((s(\tau -1)+2)/4)\big|^p d\tau\label{3rd} \eal
Let us analyse the  integral in \eqref{1st}. Since $c$ is continuous in  $(\bar \zeta, 0)$ there exists a neighbourhood $U_2\times [0,\delta)$ of $(\bar \zeta, 0)$ such that $c$ is bounded in the  $L^p$-norm and therefore, up to  decreasing   $\delta$, $\int_0^{1/4}s |c(\zeta, s)(\tau)|^p d\tau$ is less than any fixed $\epsilon>0$ on such a neighbourhood. On the other hand,
\[\int_0^{1/4}s \big|c(\bar \zeta, 0)\big ((s\tau +1)/4\big )\big|^p d\tau=4\int_{1/4}^{(s+4)/16}|c(\bar \zeta, 0)(w)|^p dw\]
which is also arbitrarily small for $s$ small enough. The integral in \eqref{3rd} can be estimated analogously. For the integral in \eqref{2nd}, we   add and subtract $c(\bar \zeta, s) \big( (t+1-s)/(4-3s)\big)$ inside the absolute value and we notice that
\bmln \int_{s/4}^{1- s/2}\big|[c(\zeta, s)-c(\bar\zeta, s)]\big( (t+1-s)/(4-3s)\big)\big|^p dt\\=(4-3s)\int_{1/4}^{1/2}\big|\big(c(\zeta, s)-c(\bar \zeta, s)\big)(\tau)\big|^p d\tau \emln
can be made  arbitrarily small in a neighbourhood of $(\bar \zeta,0)$ by the continuity of $c$  in the $L^p$-topology of the target space. It remains to show that 
\[\int_{s/4}^{1- s/2}\big|c(\bar \zeta, s)\big( (t+1-s)/(4-3s)\big)-c(\bar \zeta, 0)\big((t+1)/4\big )\big|^p dt\]is arbitrarily small for $s$ in a sufficiently small  neighbourhood of $0$.  It is enough to show that for any sequence $(s_n)$ converging to $0$, it holds  
\[\int_{s_n/4}^{1- s_n/2}\big|c(\bar\zeta, s_n)\big( (t+1-s_n)/(4-3s_n)\big)-c(\bar \zeta, 0)\big((t+1)/4\big )\big|^p dt\to 0.\]
Let  $\chi_{[1/4,1/2]}$ be the characteristic function of the interval $[1/4,1/2]$, and, for each $n\in\N$, 
\begin{align*}
 \phi_n:[0,1]\to \R,& \quad \phi_n(t) := (t+1-s_n)/(4-3s_n), \\     
 f_n:[0,1]\to [0,+\infty), & \quad f_n :=\chi_{[1/4,1/2]}\circ\phi_n\big | c(\bar\zeta, s_n)\circ\phi_n\big|^p. 
\end{align*}
Let $A\subset [0,1]$ be a measurable set. Then  we have
\baln
\int_A f_n(t) dt &= (4-3s_n)\int_{\phi_n^{-1}(A)} \chi_{[1/4,1/2]}|c(\bar\zeta, s_n)|^pd\tau\\
&\leq 2^p(4-3s_n)\int_{\phi_n^{-1}(A)} \chi_{[1/4,1/2]}|c(\bar\zeta, s_n)-c(\bar\zeta, 0)|^pd \tau\\
&\quad  +2^p(4-3s_n)\int_{\phi_n^{-1}(A)} \chi_{[1/4,1/2]}|c(\bar\zeta, 0)|^pd \tau
\ealn
Since the measures of the sets 
$\phi^{-1}_n(A)$  are  controlled from above by the measure of $A$,  we can use the absolute continuity of the last integral and the fact that $c(\bar\zeta, s_n)\to c(\bar\zeta, 0)$ in the $L^p$-norm, to conclude that the sequence $f_n$ is uniformly integrable on $[0,1]$. This implies that the sequence 
\[
\{\chi_{[1/4,1/2]}\circ\phi_n(t)\big | c(\bar\zeta, s_n)\circ\phi_n(t)-c(\bar\zeta, 0)\big((t+1)/4\big)\big|^p\}_n
\] 
is also uniformly integrable on $[0,1]$.
As  $c(\bar\zeta, s_n)\to c(\bar\zeta, 0)$ in $L^p$, there exists a subsequence, still denoted by $s_n$, such that $c(\bar\zeta, s_n)\to c(\bar\zeta, 0)$ a.e. on $[0,1]$.
Hence, for any subsequence of $(s_n)$, we have that there exists another subsequence such that 
\[
 \chi_{[1/4,1/2]}\circ\phi_n(t)\big | c(\bar\zeta, s_n)\circ\phi_n(t)-c(\bar\zeta, 0)\big((t+1)/4\big)\big|^p\to 0\;\text{ a.e. on }[0,1].
\] 
By Vitali convergence theorem (see \cite[p. 133]{Rudin}) we conclude that  
\bmln \int_{s_n/4}^{1- s_n/2}\big|c(\bar\zeta, s_n)\big( (t+1-s_n)/(4-3s_n)\big)-c(\bar \zeta, 0)\big((t+1)/4\big )\big|^p dt\\ =\int_0^1 \chi_{[1/4,1/2]}\circ\phi_n(t)\big | c(\bar\zeta, s_n)\circ\phi_n(t)-c(\bar\zeta, 0)\big((t+1)/4\big)\big|^p dt \to 0. \emln
\end{proof}
We notice that  Proposition~\ref{propBL2} gives also the following
\bpr\label{prHurew}
Let  assumptions~\ref{ass1}--\ref{ass2} hold. Then for each   $\bar x\in M$ the  endpoint map 
\[F_{\bar x} :\A \to M,\quad\quad  F_{\bar x}(u):=S(\bar x, u)(1)\]
is a Hurewicz fibration.
\epr
\begin{proof}
	As in the proof of Theorem~\ref{thmHurew}, it is enough to show that the covering homotopy condition holds locally. Using  the same notation as in the above proof, replacing $\triangle_W$ with $W$ and  $F^{-1}(\triangle_W)$ with  $F^{-1}_{\bar x}(W)$, we consider now 
	\[c(\zeta, s):=\tilde{H}_0(\zeta)\star \sigma\big (H(\zeta,0), H(\zeta,s)\big).\]
	and 
	\[
	\tilde{H}(\zeta,s)(t):=\begin{cases}
		c(\zeta, s)\big(t/(2-s)\big)&t\in [0,1-s/2]\\
		c(\zeta,s)\big((t-1+s/2)/s+1/2\big)&t\in (1-s/2,1]
	\end{cases}
	\]
\end{proof}	

\section{Homotopy equivalences of the based and the free loop space}

Let us denote by $\Lst$ the  free loop space of $M$ endowed with the $W^{1,p}$ topology,
i.e. 
\[\Lst:=\{\gamma\in W^{1,p}(I,M):\gamma(0)=\gamma(1)\}.\]
Let $x, y\in M$. We consider also the set
\[\Ost_{x,y}:=\{\gamma\in W^{1,p}(I,M):\gamma(0)=x,\ \gamma(1)=y\}\]
endowed with  the $W^{1,p}$-topology and 
\[\Omega^p_{x,y}:=F^{-1}_x(y)\subset \A.\]
Let $\Omega M$ be 
the based loop space of $M$ with base point $\bar x\in M$ endowed with the compact-open topology and  let    $i\colon W^{1,p}(I,M)\to C^0(I,M)$ be the canonical embedding.

Finally,  let 
\[\Omega_{F_{\bar x}}:=\{(u,\omega)\in \A\times M^I: F_{\bar x}(u)=\omega(0)\},\]
where $M^I$ is the space of parametrized paths in  $M$ endowed with the compact-open topology. Let us denote by $\Lambda $ a lifting function belonging to $F_{\bar x}$,   i.e.   
\[\Lambda: \Omega_{F_{\bar x}}\to \A^I\quad  \text{such that}\quad \Lambda(u,\omega)(0)=u\ \text{ and }\  F_{\bar x}\big(\Lambda(u, \omega)(t)\big)=\omega(t),\]
for each $t\in I$  (see \cite{Hur55}), where $\A^I$ is the path space in \A endowed with the compact-open topology.
We have the following lemma  whose proof is an application of \cite[Lemma 2]{DomRab12}. 
\begin{lem}\label{homotopytype}
	The map  $\eta\colon \Omega M \to F^{-1}_{\bar x}(\bar x)$, $\eta(\gamma):=\Lambda \big(0, \gamma\big)(1)$,  is a homotopy equivalence with a homotopy inverse given by the map   $i\circ \big(S(\bar x, \cdot)|_{F^{-1}_{\bar x}(\bar x)}\big)$.  
\end{lem}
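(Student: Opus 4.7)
The plan is to apply \cite[Lemma 2]{DomRab12}, which establishes that for any Hurewicz fibration $p\colon E\to B$ with contractible total space $E$ and any basepoint $e_0\in p^{-1}(b_0)$, the map $\gamma\mapsto \Lambda(e_0,\gamma)(1)$ is a homotopy equivalence between $\Omega_{b_0}B$ and $p^{-1}(b_0)$, with a concrete homotopy inverse coming from the underlying path structure of $E$. In our setting I will take $E=\A$, $B=M$, $b_0=\bar x$, $p=F_{\bar x}$, and $e_0$ the zero control. The map $F_{\bar x}$ is a Hurewicz fibration by Proposition~\ref{prHurew}, and $F_{\bar x}(0)=\bar x$ because the right-hand side of \eqref{CP} vanishes when $u\equiv 0$, so $0\in F^{-1}_{\bar x}(\bar x)$. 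The remaining substantive steps are to check that $\A$ is contractible and to identify $i\circ S(\bar x,\cdot)$ as the natural homotopy inverse.

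For contractibility of $\A$, I will use the explicit retraction
\[
\Phi\colon\A\times[0,1]\to\A,\quad \Phi(u,\tau):=\bigl((1-\tau)u_0,\sqrt{1-\tau}\,u_1,\ldots,\sqrt{1-\tau}\,u_d\bigr).
\]
If $u$ is admissible with partition $P\in D$, constants $\xi_J\ge 0$, and measurable functions $\alpha_{Ji}$ satisfying \eqref{alpha}--\eqref{ui}, then $\Phi(u,\tau)$ is admissible with the same partition $P$, rescaled constants $\sqrt{1-\tau}\,\xi_J$, and the \emph{same} coefficient functions $\alpha_{Ji}$, so the bound \eqref{alpha} is automatically preserved. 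Continuity of $\Phi$ as a map into $\A$ with its $L^p$-topology will follow at once from the triangle inequality in $L^p$. Since $\Phi(\cdot,0)=\mathrm{id}_{\A}$ and $\Phi(\cdot,1)\equiv 0$, the space $\A$ deformation retracts onto the zero control.

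Once the abstract hypotheses are in place, the two homotopies realizing the equivalence will be written down by hand. For $(i\circ S(\bar x,\cdot))\circ\eta \simeq \mathrm{id}_{\Omega M}$, I will use
\[
H(\gamma,s)(t):=\begin{cases} S\bigl(\bar x,\Lambda(0,\gamma)(s)\bigr)(t/s), & t\in[0,s],\\ \gamma(t), & t\in[s,1],\end{cases}
\]
which is well defined at the junction $t=s$ because the lifting identity gives $F_{\bar x}(\Lambda(0,\gamma)(s))=\gamma(s)$; evaluating at the endpoints yields $H(\gamma,0)=\gamma$ and $H(\gamma,1)=(i\circ S(\bar x,\cdot))\circ\eta(\gamma)$. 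The reverse homotopy $\eta\circ(i\circ S(\bar x,\cdot))\simeq\mathrm{id}_{F^{-1}_{\bar x}(\bar x)}$ will be built analogously, by gluing a piece of the contraction $\Phi$ to a lift of the underlying path $S(\bar x,u)$ via $\Lambda$, so as to interpolate within $F^{-1}_{\bar x}(\bar x)$ between $u$ and $\Lambda\bigl(0,S(\bar x,u)\bigr)(1)$.

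The main technical hurdle is the continuity of these homotopies in the prescribed topologies. For $H$, the compact-open topology on $\Omega M$ forces uniform continuity across the collapse of the first piece as $s\to 0^+$; this I expect to handle using the $C^0$-continuity of $S$ that underlies the proof of Proposition~\ref{SIC0} together with the continuity of the lifting function $\Lambda$ inherent in the Hurewicz fibration property. For the reverse homotopy the continuity in the $L^p$-topology on $\A$ near the same collapse is more delicate and will require a Vitali-type convergence argument in the spirit of the estimate for $\tilde H_1$ carried out in the proof of Theorem~\ref{thmHurew}.
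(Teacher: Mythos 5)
Your proposal follows the same route as the paper: invoke Proposition~\ref{prHurew} together with \cite[Lemma 2]{DomRab12}, so that the whole lemma reduces to the contractibility of $\A$. The only genuine difference is the contraction itself. The paper contracts $\A$ by truncation, setting $H(u,s)(t)=u(t)$ on $[0,1-s]$ and $0$ on $(1-s,1]$, whose continuity at the cut point rests on the absolute continuity of the integral; you instead rescale, $\Phi(u,\tau)=\bigl((1-\tau)u_0,\sqrt{1-\tau}\,u_1,\ldots,\sqrt{1-\tau}\,u_d\bigr)$, which visibly preserves admissibility (same partition and the same $\alpha_{Ji}$, with $\xi_J$ replaced by $\sqrt{1-\tau}\,\xi_J$) and whose joint $L^p$-continuity is immediate from the triangle inequality. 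Both contractions are correct; yours is marginally cleaner to justify, while the paper's is the one consistent with the truncation/concatenation manipulations used elsewhere in the argument. Your final two paragraphs, in which you propose to rebuild by hand the two homotopies realizing the equivalence and to identify $i\circ\bigl(S(\bar x,\cdot)|_{F^{-1}_{\bar x}(\bar x)}\bigr)$ as the inverse, duplicate exactly what \cite[Lemma 2]{DomRab12} already delivers once the fibration and contractibility hypotheses are verified; they are not wrong (your formula for $H(\gamma,s)$ is well defined at the junction and has the right endpoints), but they are unnecessary, and the delicate $L^p$-continuity issues you flag for the reverse homotopy are precisely the ones the citation lets you avoid.
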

\begin{proof}
	By Proposition~\ref{prHurew} and \cite[Lemma 2]{DomRab12}, it is enough to show  that $\A$ is contractible.
	This follows immediately by considering the map
	\baln &H\colon \A\times  [0,1]\to \A, \\	
	&H(u, s)(t):=\begin{cases}
		u(t)&t\in [0,1-s]\\
		0&t\in (1-s,1]
	\end{cases}
	\ealn	
	It is easy to see that $H$ is continuous and it is a homotopy between  the identity of $\A$ and the constant control $0\in \A$.
\end{proof}
Using Lemma~\ref{homotopytype}, we get:
\bt\label{paths}
Let assumptions~\ref{ass1}--\ref{ass2} hold. Then  for all $1\leq p<\infty$ and  any $x,\ y\in M$, the space $\Omega^p_{x,y}$ has the homotopy type of a CW-complex. In particular, the map $S(x,\cdot)|_{\Omega^p_{x,y}}$ is a homotopy equivalence between $\Omega^p_{x,y}$ and $\Ost_{x,y}$.
\et 
\begin{proof}
It is well-known that the inclusion $j\colon \Ost_{\overline{x}, \overline{x}} \to \Omega M$ is a weak homotopy equivalence 
and both the spaces $\Ost_{\overline{x},\overline{x}} $ and $\Omega M$ have the homotopy type of a CW-complex, thus $j$ is also a homotopy equivalence.   From Lemma~\ref{homotopytype} and by the arbitrariness of the base point $\bar x$, we have that  $\Omega^p_{x,x}$ has then the homotopy type of a CW-complex  for every $x\in M$.   Since all the fibers in a Hurewicz fibration with path connected base  have the same homotopy type (see, e.g.,  \cite[Proposition 4.61]{Hatcher}) we have that  $\Omega^p_{x,y}$ has the homotopy type of a CW-complex. Let us consider the following commutative diagram between Hurewicz fibrations
	\[
\begin{tikzcd}
	\A \arrow{r}{F_x}\arrow{d}{S(x,\cdot)}&M\arrow{d}{i_M}\\
	W_x^{1,p}(I,M)\arrow{r}{\tilde F_x}&M
	\end{tikzcd}
\]
where $W_x^{1,p}(I,M)$ is the space of paths in $W^{1,p}(I,M)$ starting at $x$ and $\tilde F_x$ is the endpoint map on $W_x^{1,p}(I,M)$. By the naturality of the long exact sequences of Hurewicz fibrations (see, e.g. \cite[p. 6]{Mitchell}) we then get,  for each $x,y\in M$ and $k\in \N$,  the following commutative diagram: 
\[
	\begin{tikzcd}
\arrow[shorten <=1em]{r}&\pi_{k+1}(M)\arrow{r}\arrow{d}
&\pi_{k}(\Omega^p_{x,y})\arrow{r}\arrow{d}{\big(S(x,\cdot)|_{\Omega^p_{x,y}}\big)_*}&\pi_k(\A)\arrow{r}{(F_x)_*} \arrow{d}{(S(x,\cdot)_*}&[1em]\pi_k(M)\arrow{d}\\
\arrow[shorten <=1em]{r}&\pi_{k+1}(M)\arrow{r}&\pi_{k}(\Ost_{x,y})\arrow{r}&\pi_k(W_x^{1,p}(I,M))\arrow{r}{(\tilde F_x)_*}&[1em]\pi_k(M)
\end{tikzcd}
\]
Since both $\A$ and $W_x^{1,p}(I,M)$ are contractible the above diagram reduces to
\beq\label{diag}
\begin{tikzcd}
	0\arrow{r}&\pi_{k+1}(M)\arrow{r}\arrow{d}
	&\pi_{k}(\Omega^p_{x,y})\arrow{r}\arrow{d}{\big(S(x,\cdot)|_{\Omega^p_{x,y}}\big)_*}&0\arrow{r} &[1em]\pi_k(M)\arrow{d}\\
	0\arrow{r}&\pi_{k+1}(M))\arrow{r}&\pi_{k}(\Ost_{x,y})\arrow{r}&0\arrow{r}&[1em]\pi_k(M)
\end{tikzcd}
\eeq
 which from the five lemma implies that $S(x,\cdot)|_{\Omega^p_{x,y}}$ is a weak homotopy equivalence and then by Whitehead theorem a homotopy  equivalence. 
\end{proof}
From Theorems~\ref{thmHurew} and \ref{paths},  we get the following analogous for the space $\Lambda$ of  \cite[Th. 10]{LerMon19} which concerns the horizontal  free loop  space. 
\bt\label{freeloop}
Let assumptions~\ref{ass1}--\ref{ass2} hold. Then for all $1\leq p<\infty$, $\Lambda^p$ and $\Lst$ are homotopy 
equivalent via  the map $S|_{\Lambda^p}$.
\et
\begin{proof}
	Since $F|_{\Lambda^p}$ is a Hurewicz fibration, after identifying $\triangle_M$ with $M$,  and $F^{-1}(x,x)$ with $\Omega^p_{x,x}$ we 	 have the following long exact sequence 
		\[\ldots\to \pi_k(\Omega^p_{x,x})\rightarrow\pi_k(\Lambda^p)\xrightarrow{(F|_{\Lambda^p})_*}\pi_k(M)\to \pi_{k-1}(\Omega^p_{x,x})\to \ldots\]
	 From   \eqref{diag}, we see that    $\pi_k(\Omega^p_{x,x})$ is isomorphic to $\pi_{k+1}(M)$, for each $k\geq 0$.  Moreover, the map $x\in M\mapsto (x,0)\in \Lambda^p$ is continuous and then $(F|_{\Lambda^p})_*$ is surjective. Thus the  long exact sequence above splits as 
	\[0\to \pi_{k+1}(M)\to \pi_{k}(\Lambda^p)\to \pi_{k}(M)\to 0.\]
	Now, $\Lambda^p$ has the homotopy type of a CW-complex since both $M$ and any fiber $\Omega^p_{x,x}$ do have it as well, thus the fact that the map $S|_{\Lambda ^p}$ is a homotopy equivalence follows, by Whitehead's theorem, if the induced map $(S|_{\Lambda ^p})_*\colon \pi_k(\Lambda^p)\to  \pi_k(\Lst)$ is an isomorphism for each $k\geq 0$. This is a consequence of the following diagram, which is commutative by the naturality   of the long exact sequences of Hurewicz fibrations: 
	\[
	\begin{tikzcd}
		0\arrow{r}&\pi_{k+1}(M)\arrow{r}\arrow{d}{}
		&\pi_{k}(\Lambda^p)\arrow{r}\arrow{d}{(S|_{\Lambda ^p})_*}
		&\pi_k(M)\arrow{d}{}\arrow{r}&0\\
		0\arrow{r}&\pi_{k+1}(M)\arrow{r}&\pi_{k}(\Lst)\arrow{r}&\pi_{k}(M)\arrow{r}&0
	\end{tikzcd}
	\]
\end{proof}


\end{document}